\useunder{\uline}{\ul}{}
\theoremstyle{plain}
\newtheorem{teo}{}[section]
\newtheorem{prop}[teo]{Proposition}
\newtheorem{lem}[teo]{Lemma}
\newtheorem{thm}[teo]{Theorem}
\theoremstyle{definition}
\newtheorem{ex}[teo]{Example}
\newtheorem{rem}[teo]{Remark}
\newtheorem{df}[teo]{Definition}
\newcommand\blfootnote[1]{%
  \begingroup
  \renewcommand\thefootnote{}\footnote{#1}%
  \addtocounter{footnote}{-1}%
  \endgroup
}
\title{On the dynamics of the combinatorial model of the real line}
\author{Pedro J. Chocano}
\date{}
\begin{document}
\maketitle

\begin{abstract}
We study dynamical systems defined on the combinatorial model of the real line. We prove that using single-valued maps there are no periodic points of period $3$, which contrasts with the classical and less restrictive setting. Then, we use Vietoris-like multivalued maps to show that there is more flexibility, at least in terms of periods, in this combinatorial framework than in the usual one because we do not have the conditions about the existence of periods given by the Sharkovski Theorem.
\end{abstract}

\section{Introduction}\label{sec:introduccion}
\blfootnote{2020  Mathematics  Subject  Classification:   06A07,  	37B02,  	37E15.}
\blfootnote{Keywords: Posets, dynamical systems, periodic points, combinatorial model real line.}
\blfootnote{This research is partially supported by Grant PGC2018-098321-B-100 from Ministerio de Ciencia, Innovación y Universidades (Spain).}

In the recent years, there has been an increasing interest in the theory of finite spaces and Alexandroff spaces and their application in computational dynamical systems and other areas (see \cite{barmak2020Lefschetz,mrozek2019persistent,lipinski2019conley,chocano2020topological,chocano2022onsome} and references therein). Another approach uses simplicial complexes or Lefschetz complexes, that are viewed as combinatorial objects, see \cite{mrozek2017conley} or \cite{mrozek2021combinatorial}. In general, the idea of these methods is to construct dynamical systems on combinatorial objects from classical dynamical systems (such as flows) and then study some of the dynamical information there. It is worth pointing out that the inverse problem (construct from a combinatorial dynamical system a flow or semiflow in the geometric realization of a simplicial complex preserving dynamical properties) has been discussed in \cite{mrozek2021semiflows} with positive results.

There is also a different approach that reconstructs or approximates discrete dynamical systems by sequences of combinatorial dynamics defined on finite spaces \cite{chocano2020coincidence}. This idea arises from the good properties of reconstruction for compact metric spaces that have some inverse sequences of finite topological spaces (see \cite{clader2009inverse} and \cite{chocano2021computational}). The key notion is to consider a special class of multivalued maps, the so-called Vietoris-like multivalued maps, that generalizes continuous single-valued maps and the multivalued maps considered in \cite{barmak2020Lefschetz}. One of the main reasons to consider multivalued maps is the following result:
\begin{thm}[\cite{chocano2022ontriviality}] Let $A$ be an Alexandroff space and let $\varphi:\mathbb{R}\times A\rightarrow A$ be a flow. Then $\varphi$ is trivial, i.e., $\varphi(t,x)=x$ for any $t\in \mathbb{R}$ and $x\in A$.
\end{thm}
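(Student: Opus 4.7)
The plan is to reduce the theorem to showing that, for each fixed $x\in A$, the orbit map $\varphi_{x}\colon \mathbb{R}\to A$ defined by $\varphi_{x}(t)=\varphi(t,x)$ is constant. Once this is done, evaluating at $t=0$ yields $\varphi(t,x)=\varphi_{x}(0)=x$ for every $t\in\mathbb{R}$, which is exactly triviality of the flow.

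To prove constancy I would exploit the defining feature of an Alexandroff space: every point $y\in A$ has a smallest open neighborhood $U_{y}$. Fix $t_{0}\in\mathbb{R}$ and write $y=\varphi_{x}(t_{0})$. Since $\varphi$ is continuous, so is $\varphi_{x}$, and hence $\varphi_{x}^{-1}(U_{y})$ is an open subset of $\mathbb{R}$ containing $t_{0}$; it therefore contains an interval $(t_{0}-\delta,t_{0}+\delta)$. For every $t$ in that interval one has $\varphi_{x}(t)\in U_{y}$, which translates in the specialization preorder into a relation between $y$ and $\varphi_{x}(t)$.

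Next I would symmetrize. Applying the same argument at each nearby $t$ using its own minimal neighborhood $U_{\varphi_{x}(t)}$, and shrinking $\delta$ if necessary, one gets simultaneously $\varphi_{x}(t)\in U_{y}$ and $y\in U_{\varphi_{x}(t)}$ for all $|t-t_{0}|$ small enough; this is precisely the statement that $\varphi_{x}(t_{0})$ and $\varphi_{x}(t)$ are topologically indistinguishable. Under the standard convention that Alexandroff spaces are taken to be $T_{0}$, so that they correspond bijectively to posets as in the combinatorial setting of this paper, indistinguishability collapses to equality. Hence $\varphi_{x}$ is locally constant, and since $\mathbb{R}$ is connected it is globally constant, completing the proof.

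The main subtlety I anticipate is the role of the $T_{0}$ assumption: without it the argument above only yields that $\varphi(t,x)$ and $x$ share exactly the same open neighborhoods, which is weaker than equality. In the poset/combinatorial-model context of interest here the hypothesis is automatic, but in full generality one would need additional input, such as exploiting the induced group homomorphism $\mathbb{R}\to\mathrm{Homeo}(A)$ and the rigidity of continuous $\mathbb{R}$-actions on preorders to rule out nontrivial orbits in indiscrete classes. Beyond this point, passing from local constancy to global constancy via connectedness of $\mathbb{R}$ is a routine step that I would not belabor.
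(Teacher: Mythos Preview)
The paper does not prove this theorem; it is quoted from \cite{chocano2022ontriviality} as background, so there is no in-paper argument to compare against. Assessing your proof on its own merits, the overall strategy is right but the ``symmetrize'' step has a genuine gap.

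You correctly obtain, for each $t_{0}$, a $\delta>0$ with $\varphi_{x}(t)\in U_{\varphi_{x}(t_{0})}$ (equivalently $\varphi_{x}(t)\le \varphi_{x}(t_{0})$) whenever $|t-t_{0}|<\delta$. To reverse the inequality you propose to run the same argument at each nearby $t$, producing some $\delta_{t}>0$ with $\varphi_{x}(s)\le\varphi_{x}(t)$ for $|s-t|<\delta_{t}$, and then to set $s=t_{0}$. But this requires $|t-t_{0}|<\delta_{t}$, and nothing you have said prevents $\delta_{t}\to 0$ as $t\to t_{0}$; ``shrinking $\delta$ if necessary'' does not help, because the constraint comes from the \emph{target} point, not from $t_{0}$. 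Continuity of the orbit map alone is genuinely insufficient here: for the Sierpi\'nski poset $\{0<1\}$ the map $g\colon\mathbb{R}\to\{0,1\}$ with $g(0)=1$ and $g(t)=0$ for $t\neq 0$ is continuous yet not locally constant, and exactly this obstruction appears in your symmetrization.

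The missing ingredient is the flow axiom, which you have not invoked at this stage. Each $\varphi(s,\cdot)$ is a homeomorphism of $A$, hence an order-isomorphism. Working at $t_{0}=0$, from $\varphi_{x}(-t)\le x$ for $|t|<\delta$ apply the order-isomorphism $\varphi(t,\cdot)$ to obtain
\[
x=\varphi\bigl(t,\varphi_{x}(-t)\bigr)\le \varphi(t,x)=\varphi_{x}(t).
\]
Combined with $\varphi_{x}(t)\le x$ and the $T_{0}$ hypothesis this gives $\varphi_{x}\equiv x$ on $(-\delta,\delta)$, and the group law then propagates local constancy to all of $\mathbb{R}$. Your closing remark about the homomorphism $\mathbb{R}\to\mathrm{Homeo}(A)$ is therefore not a side issue for the non-$T_{0}$ case but precisely what is needed to close the main argument.
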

Moreover, if $A$ is finite we get that for every homeomorphism $f:A\rightarrow A$ there exists a positive integer number $n_f$ such that $f^{n_f}$ is the identity map. Hence, single-valued maps are not suitable to define or adapt the usual definition of flow for Alexandroff spaces or the usual definition of discrete dynamical system for finite spaces. 

The goal of this work is to show that single-valued maps are not suitable to study dynamical systems also for non-finite Alexandroff spaces and that there are somehow more richness of dynamics in this combinatorial framework, using Vietoris-like multivalued maps, than in the classical one. To do this we consider the combinatorial model of the real line and study the dynamics generated by single-valued maps and Vietoris-like multivalued maps having in mind a classical result about periodic points. Recall that for the real line $\mathbb{R}$, there is the beautiful theorem of Sharkovski (for a recent account of this result and its history we refer the reader to \cite{burns2011Sharkovski} and the references given there): 
\begin{thm}\label{thm:sharkovskiTheorem} Consider the total ordering of $\mathbb{N}$ given by 
$$3\triangleright 5\triangleright 7 \triangleright \cdots \triangleright 2\cdot 3 \triangleright 2\cdot 5 \triangleright 2\cdot 7 \triangleright \cdots \triangleright 2^2\cdot 3\triangleright 2^2 \cdot 5 \triangleright 2^2 \cdot 7\triangleright \cdots \triangleright 2^3 \triangleright 2^2 \triangleright 2 \triangleright 1.$$
Suppose $f:\mathbb{R}\rightarrow \mathbb{R}$ is a continuous map that has a periodic point of period $m$ and $m\triangleright l$. Then $f$ also has a periodic point of period $l$. 
\end{thm}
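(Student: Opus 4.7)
The plan is to reduce Sharkovski's theorem to a combinatorial statement about a directed graph built from a given periodic orbit, using repeatedly the intermediate value theorem. The central technical device is the \emph{covering relation}: for closed intervals $I,J\subseteq\mathbb{R}$ write $I\to J$ if $f(I)\supseteq J$. The IVT gives the fundamental lemma: whenever $I_0\to I_1\to\cdots\to I_{n-1}\to I_0$ is a closed loop of intervals in the covering relation, there is a point $x\in I_0$ with $f^n(x)=x$ and $f^i(x)\in I_i$ for each $i$. A short additional argument (choosing the loop to be ``primitive'' and keeping the orbit away from shorter loops) ensures that the period of $x$ is exactly $n$.

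Given a periodic orbit of period $m$ with points $x_1<x_2<\cdots<x_m$, I would consider the $m-1$ closed intervals $I_k=[x_k,x_{k+1}]$ and the directed graph $\Gamma$ on these vertices determined by the covering relation. Because $f$ permutes the orbit, each $f(I_k)$ is bounded by two orbit points, hence a union of consecutive intervals $I_j$, so $\Gamma$ is well defined and finite. The problem then becomes combinatorial: for every $l$ with $m\triangleright l$ in the Sharkovski ordering, exhibit in $\Gamma$ a loop of length $l$ that produces a point of period exactly $l$ via the fundamental lemma.

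The delicate case is $m$ odd with $m\geq 3$. Following the Stefan cycle idea, I would locate the index $j$ where $f$ changes its ``flow direction'' on the orbit, i.e., the place where orbit points on the left get mapped to the right and vice versa, and show that $\Gamma$ must contain a Hamiltonian cycle $I_{j_1}\to I_{j_2}\to\cdots\to I_{j_{m-1}}\to I_{j_1}$ together with a shortcut edge from $I_{j_1}$ that produces loops of all lengths required by the Sharkovski order. The tail of the ordering, periods of the form $2^s$, is handled inductively: if $f$ has a point of period $2^s$ with $s\geq 1$, then $f^2$ restricted to a suitable invariant subinterval has a point of period $2^{s-1}$, and iterating yields all $2^r$ with $0\leq r\leq s$; the base case $r=0$ follows from the IVT applied to $f(x)-x$ on $[x_1,x_m]$, using that $f(x_1)>x_1$ and $f(x_m)<x_m$.

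The main obstacle is the combinatorial verification that the Stefan cycle actually appears in $\Gamma$. This requires a careful monotonicity analysis of how $f$ acts on the orbit points to the left and to the right of the turning index $j$, exploiting the oddness of $m$ to force the existence of the Hamiltonian loop plus shortcut. Once this skeleton is established, deriving the required loops of length $l$ for each $l$ with $m\triangleright l$ is essentially a bookkeeping exercise in $\Gamma$, and primitivity of the loop rules out smaller periods, giving the claimed period-$l$ point.
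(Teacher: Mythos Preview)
The paper does not prove this statement at all: Theorem~\ref{thm:sharkovskiTheorem} is quoted in the introduction as the classical Sharkovski theorem, with a pointer to the survey \cite{burns2011Sharkovski}, and is used only as background motivation for the combinatorial results that follow. There is therefore nothing in the paper to compare your proposal against.

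For what it is worth, your sketch is the standard modern approach (covering relations, the fundamental lemma via the intermediate value theorem, and the Stefan cycle structure of an odd-period orbit), and in outline it is correct. The genuinely delicate part you flag --- showing that the oriented graph on the intervals $I_1,\dots,I_{m-1}$ must contain the Stefan loop with shortcut when $m$ is odd --- does require more than a ``careful monotonicity analysis'': one typically needs a minimality hypothesis on $m$ (or else to pass to a cycle of smallest odd period) and a specific argument locating the vertex $I_j$ with $I_j\to I_j$ and then tracing how the permutation forces successive covers on alternating sides. Your reduction of the even and power-of-two cases to $f^2$ is fine in spirit but also needs the observation that the presence of an $m$-periodic point with $m$ not a power of $2$ forces $f^{2^k}$ to have an odd-period point for suitable $k$, which is how the middle rungs $2^k\cdot(\text{odd})$ of the Sharkovski order are handled. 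None of this is wrong in your proposal, but the sketch is still some distance from a complete proof.
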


The organization of the paper is as follows. In Section \ref{sec:preliminaries} we review some of the standard facts on the theory of Alexandroff spaces and fix notation. Section \ref{sec:dynamicaUnievaluada} is devoted to the study of the dynamics generated by continuous single-valued maps defined on the combinatorial model of the real line. To be specific, we show that there are only periodic points of period $2$ and classify the discrete dynamics. This contrasts with Theorem \ref{thm:sharkovskiTheorem} and proves that single-valued maps are not useful to model classical dynamics of the real line. Finally, in Section \ref{sec:dinamicaMultievaluada} we introduce Vietoris-like multivalued maps and provide several examples that prove the variety of situations that may arise when we consider Vietoris-like multivalued maps. For example, there is a Vietoris-like multivalued map having periodic points of every period. But we also give an example where there are only periodic points of period $3$. Consequently, we do not have the restrictions that Theorem \ref{thm:sharkovskiTheorem} gives and we might expect to get more richness in terms of periodic points.


\section{Preliminaries}\label{sec:preliminaries}
We fix notation and recall basic theory of Alexandroff spaces. The results of this section can be found in \cite{barmak2011algebraic} and \cite{may1966finite}.

\begin{df} An Alexandroff space is a topological space for which arbitrary intersections of open sets are still open.
\end{df}
Given an Alexandroff space $X$ and $x\in X$, $U_x$ is the open set given by the intersection of every open set containing $x$. Consider $x,y\in X$ and define $x\leq y$ if and only if $U_x\subseteq U_y$. If $X$ is a $T_0$-space, then the previous relation is a partial order on $X$. We can also obtain an Alexandroff $T_0$-space from a partially ordered set $(X,\leq)$. Consider the upper sets of $(X,\leq)$ as a basis. These two relations are mutually inverse. From now on, we assume that every Alexandroff space is a $T_0$-space and treat Alexandroff spaces and partially ordered sets (posets) as the same object without explicit mention. 

A map $f:(X,\leq_X)\rightarrow (Y,\leq_Y)$ is order-preserving if $x\leq_X y$ implies $f(x)\leq_Y f(y)$. It is an easy exercise to show that a map $f:X\rightarrow Y$ between two Alexandroff spaces is order-preserving if and only if it is continuous. Therefore, we have:

\begin{thm} The category of Alexandroff $T_0$-spaces is isomorphic to the category of partially ordered sets.
\end{thm}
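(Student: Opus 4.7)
The plan is to exhibit explicit functors in both directions and verify they are mutually inverse, both on objects and on morphisms. First I would define $F:\mathbf{Alex}_{T_0}\to\mathbf{Pos}$ by sending an Alexandroff $T_0$-space $(X,\tau)$ to the poset $(X,\leq)$ where $x\leq y$ iff $U_x\subseteq U_y$; reflexivity and transitivity are immediate, and antisymmetry uses exactly the $T_0$ separation axiom (if $U_x=U_y$ in a $T_0$-space then $x=y$, since otherwise no open set would separate them). On morphisms, $F$ sends a continuous map to itself, which is order-preserving by the fact already noted in the excerpt.

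Next I would define $G:\mathbf{Pos}\to\mathbf{Alex}_{T_0}$ by equipping a poset $(X,\leq)$ with the topology generated by the basis $\{{\uparrow}x:x\in X\}$, where ${\uparrow}x=\{y:x\leq y\}$. I would check that the arbitrary union of up-sets is an up-set and that arbitrary intersections of up-sets are up-sets, which gives the Alexandroff property; the $T_0$ axiom then follows from antisymmetry of $\leq$. An order-preserving map $f$ sent to itself is continuous because the preimage of an up-set under an order-preserving map is an up-set.

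The heart of the argument is verifying $G\circ F=\mathrm{id}$ and $F\circ G=\mathrm{id}$. Starting from an Alexandroff $T_0$-space, one must show that the topology reconstructed from the induced order coincides with the original topology; the key identity is $U_x={\uparrow}x$ in the induced order, which holds because $y\in U_x \iff U_y\subseteq U_x \iff x\leq y$. Conversely, starting from a poset and recovering the order from $G(X,\leq)$, one checks that $U_x={\uparrow}x$ by construction (the up-set ${\uparrow}x$ is the smallest basic open set containing $x$), so the reconstructed order coincides with the original. On morphisms, both functors act as the identity on underlying set-theoretic maps, so functoriality and compatibility are automatic once continuity and order-preservation are shown to be the same condition.

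The only delicate point is really the correspondence $U_x={\uparrow}x$, which must be justified carefully in both directions; everything else reduces to unwinding definitions. I would not expect a substantial obstacle, since the statement is classical and the $T_0$ hypothesis is used only to guarantee antisymmetry, matching exactly the skeletal nature of $\mathbf{Pos}$.
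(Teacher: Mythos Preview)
Your approach is exactly the one the paper sketches in the paragraph preceding the theorem (the paper does not give a formal proof, only the two constructions and the assertion that they are mutually inverse and that continuity coincides with order-preservation). So at the level of strategy there is nothing to add.

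There is, however, a genuine slip in your verification of $G\circ F=\mathrm{id}$. With the paper's convention $x\leq y\iff U_x\subseteq U_y$, your chain
\[
y\in U_x \iff U_y\subseteq U_x \iff x\leq y
\]
is wrong in the last step: $U_y\subseteq U_x$ says $y\leq x$, not $x\leq y$. Hence $U_x=\{y:y\leq x\}={\downarrow}x$, a \emph{down}-set, and the minimal open sets form a basis of down-sets, not up-sets. With your $G$ (up-sets as opens) and the paper's $F$, the composite $G\circ F$ returns the opposite topology, so the two functors as you wrote them are not mutually inverse. The fix is to make the conventions consistent: either reverse the order ($x\leq y\iff U_y\subseteq U_x$) and keep up-sets as opens, or keep the paper's order and declare the open sets to be the down-sets. (The paper's own sentence ``consider the upper sets \ldots\ as a basis'' suffers from the same mismatch, so you inherited the inconsistency; but in your explicit computation it becomes a visible error.) Once the convention is aligned, your argument goes through verbatim.
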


Let $X$ be an Alexandroff space and suppose that $U_x$ is finite for every $x\in X$. The Hasse diagram of $X$ is a digraph constructed as follows. The vertices are the points of $X$. There is an edge from $x$ to $y$ if and only if $x< y$ and there is no $z$ with $x<z<y$. In subsequent Hasse diagrams we assume an upward orientation. Moreover, a \textit{fence} in $X$ is a sequence $x_0,x_1,...,x_n$ of points such that any two consecutive points are comparable. The \textit{height} of $X$ is one less than the maximum number of elements in a chain of $X$. The height of a point $x\in X$ is defined by $ht(x)=ht(U_x)$.

We now see how Alexandroff spaces and simplicial complexes are related.

\begin{df} Let $X$ be an Alexandroff space. The order complex $\mathcal{K}(X)$ associated to $X$ is the simplicial complex whose simplices are the non-empty chains of $X$.
\end{df}
Let $f:X\rightarrow Y$ be a continuous map between posets. Then there is a natural and well-defined simplicial map $\mathcal{K}(f):\mathcal{K}(X)\rightarrow \mathcal{K}(Y)$. A simplex $\sigma$ in $\mathcal{K}(X)$ is given by a chain $x_1<...<x_n$. From this, $\mathcal{K}(f)(\sigma)$ is the simplex in $\mathcal{K}(Y)$ given by the chain $f(x_1)\leq...\leq f(x_n)$. 
\begin{df} Let $K$ be a simplicial complex. The face poset $\mathcal{X}(K)$ associated to $K$ is the poset of simplices of $K$ ordered by inclusion.
\end{df}
Similarly, if $f:K\rightarrow N$ is a simplicial map, then there is a natural continuous map $\mathcal{X}(f):\mathcal{X}(K)\rightarrow \mathcal{X}(N)$. We denote by $|K|$ the geometric realization of the simplicial complex $K$. Notice that these vertical bars $|\cdot|$ will also be used  to denote the cardinality of sets.  
\begin{thm}
For each Alexandroff space $X$ there exists a weak homotopy equivalence $f:|\mathcal{K}(X)|\rightarrow X$. For each simplicial complex $L$ there exists a weak homotopy equivalence $f:|L|\rightarrow \mathcal{X}(L)$.
\end{thm}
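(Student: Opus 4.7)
The plan is to realize both maps as instances of McCord's map and to invoke McCord's basis-like open cover theorem to conclude weak equivalence. The construction is uniform: for a point $p$ in the geometric realization of a simplicial complex $K$ (respectively of $\mathcal{K}(X)$) there is a unique simplex $\sigma_p$ of $K$ (respectively a unique chain $x_0<x_1<\cdots<x_k$ in $X$) whose open interior contains $p$. For the first statement I would set $f(p)=x_0$, choosing the min/max convention that aligns the extremal vertex of the chain with the basic open $U_x$ of the paper; for the second I would set $g(p)=\sigma_p\in\mathcal{X}(L)$. Continuity is checked directly by identifying each set $f^{-1}(U_x)$ and $g^{-1}(U_\sigma)$ with the open star of the corresponding vertex or simplex in the relevant geometric realization.

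The weak-equivalence step rests on McCord's theorem: if $\mathcal{B}$ is a basis for the topology of a space closed under finite intersections, and a continuous map $h\colon Y\to X$ restricts to a weak equivalence $h^{-1}(B)\to B$ for every $B\in\mathcal{B}$, then $h$ itself is a weak equivalence. I would apply this with $\mathcal{B}=\{U_x\}_{x\in X}$, which is a basis of the Alexandroff topology and is closed under intersections because intersections of lower (resp.\ upper) sets remain of the same kind. Each $U_x$ is contractible as an Alexandroff space because it has an extremal element $x$, and the constant order-preserving map to $x$ is a deformation retraction; the preimage $f^{-1}(U_x)$ is homeomorphic to the realization of $\mathcal{K}(U_x)$, which is a simplicial cone with apex $x$ and therefore also contractible. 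Consequently the restriction $f|_{f^{-1}(U_x)}$ is a map between weakly contractible spaces and is trivially a weak equivalence, and McCord's criterion delivers the first conclusion. An identical argument, using the sub-posets of $\mathcal{X}(L)$ consisting of faces comparable to a given simplex, handles $g$; alternatively, one observes that $\mathcal{K}(\mathcal{X}(L))$ is the barycentric subdivision $L'$ of $L$ and $|L|\cong|L'|$, so $g$ factors as a canonical homeomorphism followed by the first-part map.

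The main obstacle is not the combinatorial bookkeeping but the invocation of McCord's basis-like cover theorem itself, whose usual proof employs a colimit / quasifibration argument that goes beyond the elementary preliminaries collected in this section. Within the elementary part of the argument, the delicate step is the identification of $f^{-1}(U_x)$ with $|\mathcal{K}(U_x)|$ together with the verification that the contracting homotopy to the apex is continuous as a map into the Alexandroff space $U_x$; this point is handled cleanly by the equivalence between order-preserving and continuous maps established earlier in the preliminaries.
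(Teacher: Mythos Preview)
The paper does not prove this theorem: it is quoted in the preliminaries as a known result, with a blanket citation to Barmak's book and May's notes covering all of Section~2. Your outline is precisely McCord's original argument, and it is correct in substance.

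One small correction is worth recording. With the paper's convention (open sets are upper sets, so $U_x=\{y:y\ge x\}$) the compatible choice is $f(p)=x_k$, the \emph{maximum} of the carrier chain, not $x_0$; you hedge on this, but it is not optional, since with the minimum the preimage $f^{-1}(U_x)$ would be the closed subcomplex $|\mathcal{K}(U_x)|$ and $f$ would fail to be continuous. With the correct choice, $f^{-1}(U_x)$ is the open star of the full subcomplex $\mathcal{K}(U_x)$ inside $|\mathcal{K}(X)|$, not $|\mathcal{K}(U_x)|$ itself. The two are not homeomorphic in general, but the open star deformation retracts onto $|\mathcal{K}(U_x)|$, which is a cone on the vertex $x$ and hence contractible. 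This repairs the one imprecise sentence in your sketch without affecting the remainder of the argument.
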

The face poset of a simplicial complex provides us a combinatorial model of the original topological space. Let us consider the real line $\mathbb{R}$ and a triangulation on it. Then we have that its combinatorial model is given by $L=\{x_i\}_{i\in \mathbb{Z}}$ with $x_i< x_{i+1}> x_{i+2}$ if $i$ is odd and $x_{-1}<x_0>x_{1}$ (see Figure \ref{fig:CombRealLine}).
\begin{figure}[h]
\centering
\includegraphics[scale=1]{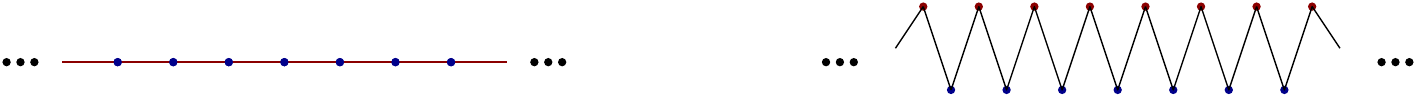}\caption{On the left a triangulation of the real line and on the right the Hasse diagram of its combinatorial model.}\label{fig:CombRealLine}
\end{figure}

In what follows, $L$ stands for the face poset of a triangulation given on the real line. Let $a,b\in L$. We say that $[a,b]$ is the set of points that are in the fence $a\geq...\leq b$ and we call it interval. We write $(a,b)=[a,b]\setminus\{a,b \}$. For simplicity, we will denote also $[a,b]=[b,a]$. For instance, $[x_3,x_0]=\{x_0,x_1,x_2,x_3 \}=[x_0,x_3]$ because $x_0>x_1<x_2>x_3$. We say that a sequence of points $\{t_j \}_{j\in \mathbb{N}}$ in $L$ tends to $x_{\infty}$ ($x_{-\infty}$) if for every $x_i\in L$ there exists $n_0\in \mathbb{N}$ satisfying that $t_n=x_{i_n}$ with $i_n>i$ ($i_n<i$) whenever $n\geq n_0$.  This condition means that the sequence goes to the right (left). Observe that for every pair of points $a$ and $b$ in $L$, $[a,b]$ is a finite topological space which is contractible. 

Given a continuous map $f:X\rightarrow X$, a point $x\in X$ is called a periodic point if there exists $m$ such that $f^{m}(x)=x$. We say that $x$ is a periodic point of period $n$ if $n$ is the smallest positive integer satisfying that $f^n(x)=x$ and $x$ is a fixed point if $n=1$.

\section{Dynamics of the combinatorial model of the real line}\label{sec:dynamicaUnievaluada}
In this section we describe the dynamics generated by a continuous map $f:L\rightarrow L$.

\begin{lem}\label{lem:IntervaloAintervalo} Let $f:L\rightarrow L$ be a continuous map and $a,b\in L$. Then $[f(a),f(b)]\subseteq f([a,b])$.
\end{lem}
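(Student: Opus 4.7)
The plan is to exploit two structural features of $L$ revealed by its Hasse diagram: first, that an interval in $L$ is exactly a run of consecutively indexed points, and second, that two points of $L$ are comparable precisely when their indices differ by at most one. Writing $a = x_s$ and $b = x_t$, and using the convention $[a,b] = [b,a]$, I may assume $s \le t$. Inspection of the fence $x_s, x_{s+1}, \dots, x_t$ (using the alternating pattern $x_i < x_{i+1} > x_{i+2}$ for $i$ odd) shows that consecutive pairs are comparable, so this fence realizes $[a,b]$ and in fact $[a,b] = \{x_s, x_{s+1}, \dots, x_t\}$.

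Next I would push this fence through $f$. Because continuous maps between Alexandroff spaces are exactly the order-preserving maps, the sequence $f(x_s), f(x_{s+1}), \dots, f(x_t)$ again has comparable consecutive terms in $L$. Writing $f(x_j) = x_{p_j}$, the comparability characterization of $L$ forces $|p_{j+1} - p_j| \le 1$ for every $j$, since in $L$ two distinct points are comparable only when their indices are consecutive integers.

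At this point the conclusion follows from a discrete intermediate value principle: any integer sequence $p_s, p_{s+1}, \dots, p_t$ whose consecutive terms differ by at most one must assume every integer value lying between $p_s$ and $p_t$. Since $f(a) = x_{p_s}$ and $f(b) = x_{p_t}$, the interval $[f(a),f(b)]$ is precisely $\{x_r : r \text{ lies between } p_s \text{ and } p_t\}$, and each such $x_r$ is $f(x_j)$ for some $j \in \{s,\dots,t\}$, hence $x_r \in f([a,b])$. The only thing to check carefully is the pair of combinatorial claims about $L$ highlighted above; both are routine from the definition, so I do not expect a real obstacle beyond bookkeeping. If anything, the most delicate point is simply ensuring the description of $[a,b]$ as a set of consecutively indexed points, which is immediate from the Hasse diagram of $L$.
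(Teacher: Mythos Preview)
Your argument is correct and is essentially the same as the paper's: both write $[a,b]$ as the fence $a\le x_1\ge x_2\le\cdots\ge b$, observe that continuity makes $f(a),f(x_1),\dots,f(b)$ again a fence, and conclude from the linear structure of $L$ that this fence must pass through every point of $[f(a),f(b)]$. The only difference is that the paper states this last step without elaboration, whereas you unpack it explicitly via indices and the discrete intermediate value principle $|p_{j+1}-p_j|\le 1$.
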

\begin{proof}
By definition $[a,b]=a\leq x_1\geq x_2\leq ...\geq b$. By the continuity of $f$ we get
$$ f(a)\leq f(x_1)\geq f(x_2)\leq ...\geq f(b), $$
and so $[f(a),f(b)]\subseteq f([a,b])$.
\end{proof}

\begin{rem}\label{rem:cardinalBaja} Let $f:[a,b]\rightarrow L$ be a continuous map, where $a,b\in L$. Then  $|[a,b]|\geq |f([a,b])|\geq|[f(a),f(b)]|$.
\end{rem}
\begin{prop}\label{prop:noHayPeriodo3} Let $f:L\rightarrow L$ be a continuous map. Then $f$ does not have a point of period $3$.
\end{prop}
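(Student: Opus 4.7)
The plan is to argue by contradiction. Suppose $f$ has a point of period $3$ and let the three points of its orbit be relabelled as $a = x_i$, $b = x_j$, $c = x_k$ with $i < j < k$; this is possible because the three iterates are distinct elements of $L = \{x_n\}_{n\in\mathbb{Z}}$. The starting observation is the combinatorial identity
\[
|[a,c]| = |[a,b]| + |[b,c]| - 1,
\]
which follows from $|[x_\ell, x_m]| = m - \ell + 1$ for $\ell \leq m$. Since $a$, $b$, $c$ are pairwise distinct, both $|[a,b]|$ and $|[b,c]|$ are at least $2$, so $|[a,c]|$ is strictly greater than each of them.

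Next I would split into the two possible cyclic orderings of a $3$-cycle on $\{a,b,c\}$. For the cycle $a \mapsto b \mapsto c \mapsto a$, I would apply Lemma \ref{lem:IntervaloAintervalo} to the interval $[b,c]$: this gives $[a,c] = [c,a] = [f(b), f(c)] \subseteq f([b,c])$, and Remark \ref{rem:cardinalBaja} then forces $|[a,c]| \leq |[b,c]|$, contradicting the strict inequality above. For the cycle $a \mapsto c \mapsto b \mapsto a$, I would instead apply the lemma to $[a,b]$: this yields $[a,c] = [c,a] = [f(a), f(b)] \subseteq f([a,b])$ and hence $|[a,c]| \leq |[a,b]|$, again contradicting the strict inequality. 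Since these exhaust the $3$-cycles on $\{a,b,c\}$, the assumption that a period-$3$ point exists is untenable.

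The only real delicacy is the convention $[a,b]=[b,a]$: in each cyclic order one has to identify the interval whose endpoints are mapped to the ``outer'' pair $\{a,c\}$ of the orbit, so that the image contains an interval strictly larger than the one we started with. Once that bookkeeping is done, the whole argument is a purely combinatorial size count resting on Lemma \ref{lem:IntervaloAintervalo} and Remark \ref{rem:cardinalBaja}, and no extra structure of $f$ is needed beyond order-preservation.
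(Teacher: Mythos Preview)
Your proof is correct and, in fact, slightly more streamlined than the paper's. The paper also argues by contradiction but does not first order the three iterates by their indices: it writes $x$, $y=f(x)$, $z=f(y)$ and chains Remark~\ref{rem:cardinalBaja} all the way around the cycle to force $|[x,y]|=|[y,z]|=|[z,x]|$, then uses Lemma~\ref{lem:IntervaloAintervalo} to conclude that the three restrictions of $f$ are homeomorphisms and hence $ht(x)=ht(y)=ht(z)$; only at that point does it observe that one of the three points lies in the interval determined by the other two, and the equal cardinalities then collapse everything. You effectively front-load that last observation by fixing $i<j<k$ at the outset, so a single application of the lemma and the remark to the appropriate sub-interval already yields a strict size contradiction, with no need for the homeomorphism/height detour. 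Both arguments rest on exactly the same two ingredients; yours just deploys them more economically, at the price of a small case split on the cyclic order of the orbit.
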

\begin{proof}
We argue by contradiction. Suppose that $f$ contains a periodic point of period $3$. Then there exists $x$ such that $x\neq f(x)$, $x\neq f^2(x)\neq f(x)$ and $f^3(x)=x$. Let us denote $f(x)=y$ and $f(y)=z$ for simplicity. By Remark \ref{rem:cardinalBaja}, we get
$$|[x,y]|\geq |f([x,y])|\geq |[y,z]|\geq |f([y,z])| \geq  |[z,x]|\geq |f([z,x])|\geq |[x,y]|, $$
which yields that $|[x,y]|= |f([x,y])|=|[y,z]|= |f([x,y])|=|[z,x]|=|f([z,x])|$, and using Lemma \ref{lem:IntervaloAintervalo} $f([x,y])=[y,z]$, $f([y,z])=[z,x]$ and $f[(z,x)]=[x,y]$. Therefore, $f_{|[x,y]}:[x,y]\rightarrow [y,z]$, $f_{|[y,z]}:[y,z]\rightarrow [z,x]$ and $f_{|[z,x]}:[z,x]\rightarrow [x,y]$ are homeomorphisms and so $ht(x)=ht(y)=ht(z)$. This means that one of these three points, let us say $y$, satisfies that $y\in [x,z]$. By the cardinality of each of the previous intervals we get that $x=y=z$, which entails the contradiction.
\end{proof}
Notice that the same arguments may be adapted to get
\begin{thm}\label{thm:dinamica_periodo_3_no} Let $f:L\rightarrow L$ be a continuous map. Then $f$ does not have periodic points of period $n\geq 3$.
\end{thm}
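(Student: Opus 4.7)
The strategy is to mirror the opening of the proof of Proposition~\ref{prop:noHayPeriodo3}, deriving equal interval cardinalities around an arbitrary periodic cycle, and then finish with a combinatorial observation about closed unit-step walks on $\mathbb{Z}$, which replaces the ad hoc three-point argument used for $n=3$.

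Suppose for contradiction that $f$ has a periodic point $y_0$ of period $n\geq 3$ and set $y_i=f^i(y_0)$, so the points $y_0,\ldots,y_{n-1}$ are pairwise distinct and $f(y_{n-1})=y_0$. Iterating Remark~\ref{rem:cardinalBaja} around the orbit yields
\[
|[y_0,y_1]|\geq|[y_1,y_2]|\geq\cdots\geq|[y_{n-1},y_0]|\geq|[y_0,y_1]|,
\]
so all these cardinalities coincide in a common value $c$.

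Writing each $y_i$ as $x_{p_i}$ in the enumeration $L=\{x_j\}_{j\in\mathbb{Z}}$, and using the fact that intervals in $L$ correspond to consecutive index ranges, this yields $|p_{i+1}-p_i|=c-1=:d$ for every $i$ modulo $n$. If $d=0$ then all the $p_i$ coincide and the orbit reduces to a single point, contradicting $n\geq 2$, so $d\geq 1$. Setting $\varepsilon_i=\operatorname{sgn}(p_{i+1}-p_i)\in\{\pm 1\}$ and $s_i=\sum_{j<i}\varepsilon_j$, one has $p_i=p_0+d\,s_i$; hence distinctness of the orbit is equivalent to distinctness of $s_0,\ldots,s_{n-1}$, and the closure of the cycle gives $s_n=0$.

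The finishing combinatorial observation is that an injective unit-step walk $(s_0,\ldots,s_{n-1})$ on $\mathbb{Z}$ must be strictly monotonic, since any interior reversal at an index $1\leq j\leq n-2$ would produce $s_{j-1}=s_{j+1}$ and contradict injectivity. Consequently $|s_{n-1}|=n-1$, while $s_n=0$ and $|s_n-s_{n-1}|=1$ force $n-1=1$, so $n\leq 2$, contradicting $n\geq 3$. The main obstacle I expect is the bookkeeping step identifying $|[x_a,x_b]|$ with $|a-b|+1$ for intervals in $L$: this is visible in the example worked out in the preliminaries but deserves an explicit verification from the definition of interval before the walk argument can be cleanly applied.
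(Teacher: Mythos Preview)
Your argument is correct. The paper gives no detailed proof here, only asserting that the argument of Proposition~\ref{prop:noHayPeriodo3} adapts; your proof begins the same way (equal interval cardinalities around the orbit via Remark~\ref{rem:cardinalBaja}) but then diverges. The paper's $n=3$ argument continues by showing the restrictions $f_{|[y_i,y_{i+1}]}$ are homeomorphisms, deducing that all orbit points have the same height, and then exploiting the linear order of same-height points plus additivity of interval cardinalities to force a collapse. You bypass the homeomorphism and height steps entirely: once consecutive intervals all have cardinality $c$, you translate this directly into the statement that the indices $p_i$ form a closed walk on $\mathbb{Z}$ with constant step $d=c-1$, and the elementary fact that a self-avoiding unit-step walk on $\mathbb{Z}$ is monotone finishes things. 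Your route is more elementary and uniform in $n$; the paper's route extracts the structural fact that the orbit lies at a single height, which you do not need. The identity $|[x_a,x_b]|=|a-b|+1$ you flag is immediate from the definition of $L$ (the fence between $x_a$ and $x_b$ is exactly the set of $x_j$ with $j$ between $a$ and $b$), so it poses no real obstacle.
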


We now focus on the periodic points of period $2$. It is easy to find continuous maps having periodic points of period $2$ (consider $f(x_i)=x_{-i}$ and $f(x_0)=x_0$). We describe the continuous maps having such property.
\begin{thm}\label{thm:DinamicaPeriodo2} Let $f:L\rightarrow L$ be a continuous map that has a periodic point of period $2$. Then $f$ satisfies only one of the following properties.
\begin{itemize}
\item $f$ is a homeomorphism that fixes only one point $z$ and every $s\in L\setminus \{z\}$ is a periodic point of period $2$.
\item $f$ fixes only one point $z$ and there exists an interval $[x,y]$ containing $z$ such that  every $t\in [x,y]\setminus \{z\}$ is a periodic point of period $2$ and for every $t\in L\setminus [x,y]$ there exists a positive integer number $n$ with $f^n(t)\in [x,y]$. 
\end{itemize}
\end{thm}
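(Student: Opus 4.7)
The plan is to first extract the local structure on the fence $[p,q]$ associated with a period-$2$ pair $\{p,q\}$, with $q=f(p)$, and then globalise using the uniqueness of the fixed point of $f$. Combining Lemma \ref{lem:IntervaloAintervalo} and Remark \ref{rem:cardinalBaja} applied to both $(p,q)$ and $(q,p)$ yields $|[p,q]|\geq|f([p,q])|\geq|[q,p]|=|[p,q]|$, so $f([p,q])=[p,q]$ and $f_{|[p,q]}$ is an order-preserving bijection of a finite poset. Such a bijection has finite order, so its inverse is itself an iterate of $f_{|[p,q]}$, and therefore $f_{|[p,q]}$ is a homeomorphism. The only homeomorphism of the fence $[p,q]$ that swaps its endpoints is the reflection about the midpoint; this forces $p$ and $q$ to have the same parity of index in $L$, pinpoints a unique fixed element $z\in[p,q]$, and makes every other element of $[p,q]$ a period-$2$ point. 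In particular $|[p,q]|\geq 3$.

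Next I would prove that $z$ is the only fixed point of $f$ on all of $L$. A fixed point in $[p,q]\setminus\{z\}$ is impossible by the previous paragraph. If $w\notin[p,q]$ were a fixed point, say on the side of $q$, then $[w,p]=[w,q]\cup[q,p]$ with overlap $\{q\}$, so $|[w,p]|=|[w,q]|+|[p,q]|-1$. Lemma \ref{lem:IntervaloAintervalo} applied to $[w,q]$ gives $[w,p]=[f(w),f(q)]\subseteq f([w,q])$, so $|[w,q]|\geq|[w,q]|+|[p,q]|-1$, contradicting $|[p,q]|\geq 3$. Now let $P$ consist of $z$ together with all period-$2$ points of $f$. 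The first step applied to any period-$2$ pair $\{p',q'\}$ produces a reflection on $[p',q']$ whose only fixed point is forced to be $z$ by uniqueness, so $z$ is the midpoint of $[p',q']$ and $[p',q']\subseteq P$. Consequently $P$ is a union of finite intervals symmetric about $z$, and is therefore itself an interval symmetric about $z$, on which $f$ acts as the common reflection through $z$.

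If $P=L$, then $f$ is the reflection of $L$ about $z$, a continuous bijection and hence a homeomorphism, giving the first bullet. Otherwise write $P=[x,y]$ with $y=f(x)$; it remains to show that every $t\in L\setminus[x,y]$ satisfies $f^n(t)\in[x,y]$ for some $n$. Applying Lemma \ref{lem:IntervaloAintervalo} to $(z,t)$ together with $f(z)=z$ yields $[z,f(t)]\subseteq f([z,t])$, so $|[z,f(t)]|\leq|[z,t]|$; by induction the full forward orbit of $t$ is contained in the finite set $\{s\in L : |[z,s]|\leq|[z,t]|\}$. Pigeonhole makes the orbit eventually periodic, and by Theorem \ref{thm:dinamica_periodo_3_no} the resulting cycle has period $1$ or $2$, so it lies inside $P=[x,y]$; in particular some iterate $f^n(t)$ enters $[x,y]$, as claimed.

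The main obstacle I anticipate is verifying that $P$ really is a single interval rather than a union of intervals through $z$, together with the compatibility of the various local reflections; both issues are dispatched at once by the uniqueness of $z$ as a fixed point of $f$, which forces every period-$2$ interval to be centred at $z$ with the same reflection. The rest is bookkeeping with the interval-size inequalities of Lemma \ref{lem:IntervaloAintervalo} and one appeal to the already-established Theorem \ref{thm:dinamica_periodo_3_no} to close the argument on the complement of $[x,y]$.
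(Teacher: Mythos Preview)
Your argument is correct and follows the same overall architecture as the paper: analyse $f$ on a period-$2$ interval, prove the fixed point is unique in $L$, show the set $P$ of period-$2$ points together with $z$ is an interval symmetric about $z$, and then split according to whether $P$ is all of $L$ or finite.

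Two points of genuine difference are worth noting. First, the paper invokes the Lefschetz fixed point theorem for finite posets to locate $z$ inside $[p,q]$, whereas you extract $z$ directly by identifying $f_{|[p,q]}$ with the unique endpoint-swapping automorphism of a finite fence; your route is more elementary and simultaneously yields $ht(p)=ht(q)$ and the reflection structure in one stroke. Second, for the attraction statement the paper argues by contradiction that the even iterates $f^{2n}(t)$ oscillate across $[x,y]$ and that $|[x,f^{2n}(t)]|$ must stabilise, producing a second fixed point of some $f^{2n_0}$; you instead bound the entire forward orbit by the finite ball $\{s:|[z,s]|\le|[z,t]|\}$, apply pigeonhole, and then appeal to the already-proved Theorem \ref{thm:dinamica_periodo_3_no} to force the resulting cycle into $P$. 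Your endgame is cleaner and makes explicit the logical dependence on the absence of higher periods, which the paper's argument leaves implicit in its stabilisation step.
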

\begin{proof}
By hypothesis there exists a point $x$ such that $f(x)=y\neq x$ and $f^2(x)=x$. Using Remark \ref{rem:cardinalBaja} and Proposition \ref{lem:IntervaloAintervalo} we get that $f_{|[x,y]}:[x,y]\rightarrow [x,y]$ is a homeomorphism. Hence, $ht(x)=ht(y)$. Without loss of generality we assume that $ht(x)=1$. On the other hand, by the Lefschetz fixed point theorem (see \cite{baclawski1979fixed}) there exists a fixed point, denoted by $z$, for $f_{|[x,y]}$. Again, we get that $|[x,z]|=|[z,y]|$ and $f_{|[x,z]}:[x,z]\rightarrow [z,y]$ is a homeomorphism by Remark \ref{rem:cardinalBaja} and Lemma \ref{lem:IntervaloAintervalo}. Therefore, $f_{|[x,y]}$ fixes $z$ and for every point $t\in(x,z)$,  $f(t)\in(z,y)$ and $f^2(t)=t$. 

It only remains to study the behaviour of $f$ outside $[x,y]$. Suppose that there exists a fixed point $t\notin [x,y]$. Repeating some of the arguments used before we may conclude that $f_{|[z,t]}:[z,t]\rightarrow [z,t]$ is a homeomorphism, which clearly contradicts the conclusion obtained above. Therefore, for every $s$ with $s\neq f(s)$ and $f^2(s)=s$, we have that $z\in[s,f(s)]$. 

Consider $\textnormal{P}(2)=\{ s\in L| s\neq f(s)$ and $s=f^2(s)\}\cup \{ z\}$. Then $\textnormal{P}(2)$ is an interval. If $|\textnormal{P}(2)|$ is non-finite, then $f$ is a homeomorphism satisfying the first property. We now study the case where $|\textnormal{P}(2)|$ is finite. Since it is finite, there exist $x,y\in \textnormal{P}(2)$ satisfying that $[x,y]=\textnormal{P}(2)$. We argue by contradiction, suppose that there exists $t\in L \setminus [x,y]$ such that $f^n(t)\notin [x,y]$ for every $n\in \mathbb{N}$. Without loss of generality, assume that $t$ is on the right of $y$. By the continuity of $f$, we get that $f(t)$ is on the left of $x$, $f^2(t)$ is on the right of $y$ and so on. By Remark \ref{rem:cardinalBaja},
$$|[x,t]|\geq |[x,f^2(t)]|\geq |[x,f^4(t)]|\geq...\geq |[x,f^{2n}(t)]|\geq ... $$
It is clear that the sequence $\{ |[x,f^{2n}(t)]|\}_{n\in \mathbb{N}}$ should stabilize, otherwise we get $f^{2n}(t)=x$ for some $n\in \mathbb{N}$. Then there exists $n_0\in \mathbb{N}$ such that for every $n\in \mathbb{N}$ with $n\geq n_0$, $|[x,f^{2n_0}(t)]|=|[x,f^{2n}(t)]|$. This means that $t$ is a fixed point for $f^{2n_0}$. But $z$ is also a fixed point for $f^{2n_0}$, which gives the contradiction with the previous arguments about the existance of two fixed point for a continuous map having periodic points of period $2$. 
\end{proof}

This theorem describes the dynamics generated by a continuous map $f:L\rightarrow L$ having periodic points of period $2$, for a schematic representation of it see Figure \ref{fig:DinamicaPeriodo2}. The first picture corresponds to the first property of Theorem \ref{thm:DinamicaPeriodo2}. The second picture corresponds with the second property, where we have an attractor (points in the orange region). 
\begin{figure}[h]
\centering
\includegraphics[scale=0.7]{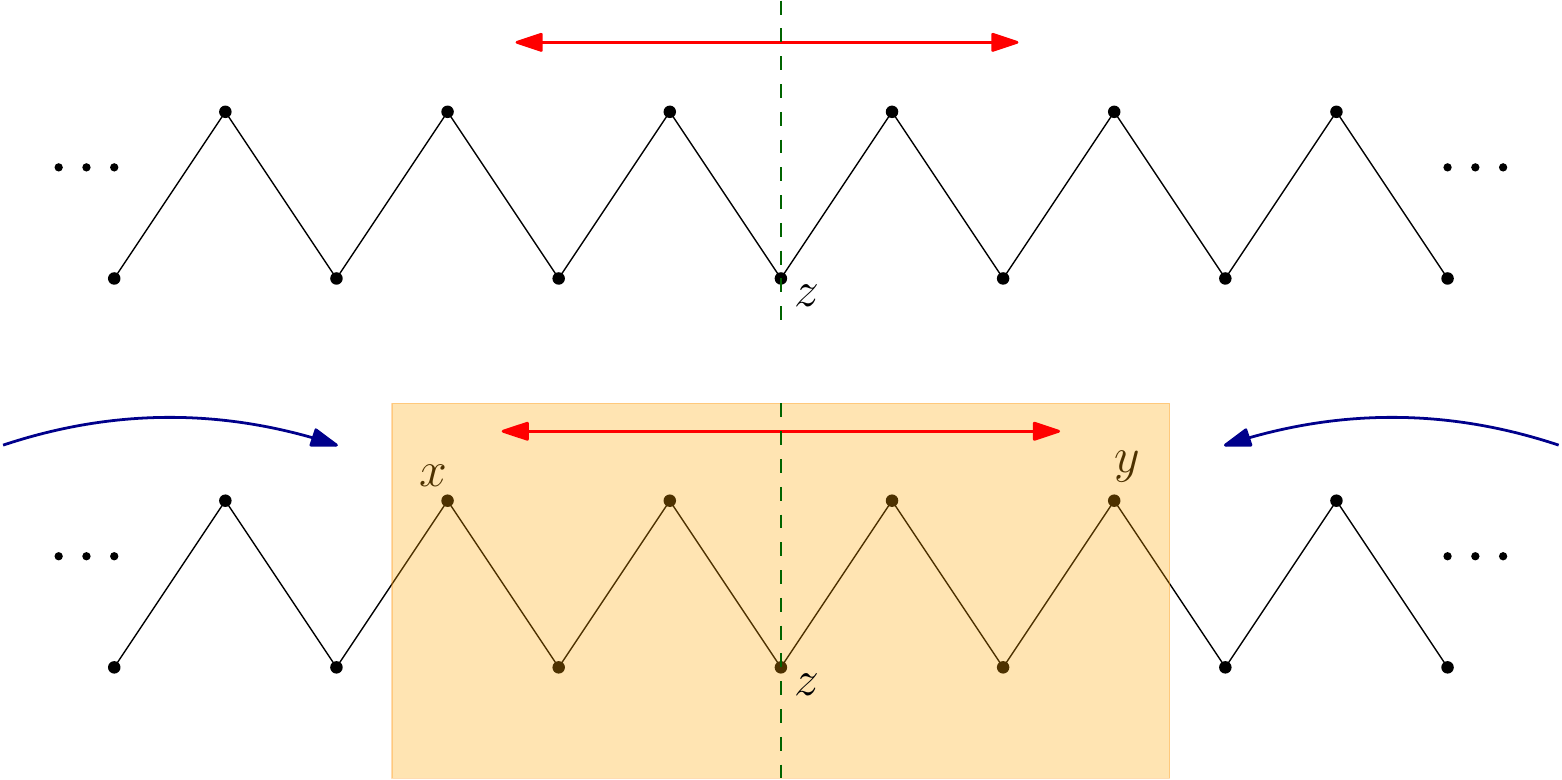}\caption{Schematic description of the situations described in Theorem \ref{thm:DinamicaPeriodo2}.}\label{fig:DinamicaPeriodo2}
\end{figure}
\begin{rem} We might say that the theorem of Sharkovski holds trivially in this setting since period $2$ implies that there exists a fixed point (Theorem \ref{thm:DinamicaPeriodo2} and $2\triangleright 1$) and there are no periodic points of period greater or equal than $3$ (Theorem \ref{thm:dinamica_periodo_3_no}). 
\end{rem}

To conclude this section, we study the behaviour of the dynamics generated by a continuous map without periodic points.
\begin{thm} Let $f:L\rightarrow L$ be a continuous map that does not have periodic points. Then $f$ satisifies only one of the following properties.
\begin{itemize}
\item $f$ is the identity map.
\item There exists an interval $[z,w]$ such that $f_{|[z,w]}$ is the identity map and for every $x\in L$, there exists a positive integer number $n$ satisfying that $f^{n}(x)\in [z,w]$.
\item $f$ does not have fixed points and for every $x\in L$ the sequence $\{f^n(x)\}_{n\in \mathbb{N}}$ tends to $x_{\infty}$ or for every $x\in L$ the sequence $\{f^n(x)\}_{n\in \mathbb{N}}$ tends to $x_{-\infty}$.
\end{itemize} 

\end{thm}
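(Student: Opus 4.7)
The plan is to split the analysis according to the fixed-point set $\textnormal{Fix}(f)$. By Proposition~\ref{prop:noHayPeriodo3}, Theorem~\ref{thm:dinamica_periodo_3_no} and the hypothesis (which must rule out orbits of period $\geq 2$, since the first bullet exhibits the identity with all its fixed points), every periodic point of $f$ is a fixed point. The three bullets will then correspond to $\textnormal{Fix}(f) = L$, $\emptyset \neq \textnormal{Fix}(f) \neq L$, and $\textnormal{Fix}(f) = \emptyset$ respectively.

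The first preparatory step is to show that $\textnormal{Fix}(f)$ is interval-convex: if $a,b\in \textnormal{Fix}(f)$ then $[a,b]\subseteq \textnormal{Fix}(f)$. Applying Lemma~\ref{lem:IntervaloAintervalo} and Remark~\ref{rem:cardinalBaja} to $[a,b]$ forces $f([a,b])=[a,b]$ with $f|_{[a,b]}$ a continuous self-bijection of a finite poset, hence a homeomorphism (by finiteness some iterate equals the identity, so the inverse is a positive iterate and therefore continuous). Since the Hasse diagram of the fence $[a,b]$ is a path, the poset automorphism group of $[a,b]$ has order at most $2$, and any automorphism fixing both endpoints must be the identity. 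Thus $\textnormal{Fix}(f)$ is an interval of $L$, and the case $\textnormal{Fix}(f)=L$ immediately yields the first bullet.

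For the second bullet, assume $\emptyset \neq \textnormal{Fix}(f) \neq L$ and fix $z_0 \in \textnormal{Fix}(f)$. The sequence $c_n := |[z_0, f^n(t)]|$ is non-increasing by Lemma~\ref{lem:IntervaloAintervalo} and Remark~\ref{rem:cardinalBaja}, hence eventually constant at some value $d+1$; then $f^n(t)$ lies for all large $n$ in the set $\{p,q\}$ of (at most two) points at fence-distance $d$ from $z_0$, and the no-period-$2$ hypothesis rules out the swap $f(p)=q$, $f(q)=p$, so at least one of $p,q$ is a fixed point and the orbit terminates there. For the third bullet, writing $f(x_i) = x_{k_i}$ and $d_i := k_i - i$, continuity together with the fact that comparable points of $L$ are either equal or Hasse-adjacent gives $|k_{i+1} - k_i| \leq 1$, so $d_{i+1} - d_i \in \{-2,-1,0\}$ and the sequence $(d_i)$ is non-increasing. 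The no-fixed-point assumption forces $d_i \neq 0$, and the only sign change compatible with continuity is $d_i = 1, d_{i+1} = -1$, which is a period-$2$ exchange and thus excluded. Hence $d_i \geq 1$ for all $i$ or $d_i \leq -1$ for all $i$, and iterating shows that $f^n(x_i)$ drifts monotonically to $x_\infty$ or to $x_{-\infty}$.

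I expect the main obstacle to be the second-bullet argument: guaranteeing that once the cardinality sequence $c_n$ stabilizes the orbit actually reaches a fixed point instead of oscillating between the two candidate points $p,q$, which requires a careful enumeration of the four possibilities for $(f(p),f(q))$ and crucially uses the exclusion of period-$2$ orbits. A secondary subtlety is the homeomorphism claim for $f|_{[a,b]}$, which leverages finiteness of the automorphism group to make continuity of the inverse automatic.
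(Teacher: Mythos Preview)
Your argument is sound and reaches the same conclusions, but by a genuinely different route from the paper.

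For the third bullet you introduce the displacement $d_i=k_i-i$ (where $f(x_i)=x_{k_i}$) and observe that Hasse-adjacency forces $|k_{i+1}-k_i|\le 1$, so $(d_i)_{i\in\mathbb{Z}}$ is non-increasing; since $d_i\neq 0$ and the only possible sign change $d_i=1,\ d_{i+1}=-1$ is a period-$2$ swap, all $d_i$ share a sign and every orbit drifts monotonically. This is completely elementary. The paper instead argues by contradiction: assuming one point moves right and another left, it compares $[x_j,x_i]$ with $[x_m,x_k]$ and invokes either the Lefschetz fixed-point theorem of \cite{baclawski1979fixed} or Remark~\ref{rem:cardinalBaja}. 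Your version avoids the external citation and handles all relative positions of the two intervals at once; the paper's case split (containment one way or the other) is terser but leaves the overlapping-but-incomparable configuration implicit.

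For the second bullet you run a global distance-decrease: $c_n=|[z_0,f^n(t)]|$ is non-increasing, hence stabilises, and the two candidate endpoints cannot be exchanged without creating a $2$-cycle. The paper proceeds locally, examining $f$ on the neighbours of the boundary of the fixed interval and ``repeating a similar argument'', and then treats the single-fixed-point case separately. Your approach subsumes that case and isolates the use of the no-period-$2$ hypothesis in a single step; the paper's local viewpoint is closer in spirit to the Hasse diagram but relies more on the reader to fill in the induction.

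One caveat common to both arguments: interval-convexity of $\textnormal{Fix}(f)$ does not by itself guarantee that $\textnormal{Fix}(f)$ is a \emph{bounded} fence $[z,w]$; a half-line of fixed points (e.g.\ $f(x_i)=x_i$ for $i\ge 0$, $f(x_i)\in\{x_0,x_1\}$ for $i<0$ according to parity) is not excluded by what you wrote. The paper's ``if this interval is non-finite we get that $f$ is the identity'' glosses over the same point, so your proposal is no weaker here, but you may wish to flag it.
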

\begin{proof}
Consider the maximal interval for which $f$ is the identity map. If this interval is non-finite we get that $f$ is the identity. Suppose that this interval is finite and has at least two elements, we denote it by $[z,w]$. Without loss of generality assume that $ht(z)=1$. Consider $t<z$ such that $t\notin[ z,w]$. By the continuity of $f$ we have $f(t)\leq f(z)=z$ and since $U_z=\{ z,t,k\}$ for some $k<z$ we get that  $f(t)=t$ or $f(t)\in [z,w]$. The first condition contradicts the maximality of $[z,w]$. The second condition gives $f(t)=k$ or $f(t)=z$, but $z,k\in [z,w]$, since otherwise we get a contradiction with the fact that $f$ fixes the points of $[z,w]$ because $z$ would be an isolated fixed point. Repeating a similar argument we can deduce that for every $x\in L$ there exists a positive integer such that $f^n(x)\in [z,w]$. Suppose now that there exists only one fixed point $z$, without loss of generality assume that $ht(z)=1$. Consider $t,w<x$. If $f(t)=w$, then $f(w)=z$ since otherwise we have that $f(w)=t$, which implies that $t$ is a periodic point of period 2. Repeating previous arguments we can conclude that for every $l\in L$ there exists a positive integer number $n$ such that $f^n(l)=x
$.

Finally, assume that $f$ does not have fixed points. We argue by contradiction to get the third property. Suppose that there are two points $x_i,x_j\in L$ such that $f(x_i)=x_k$ with $k>i$ and $f(x_j)=x_m$ with $m<j$. By Lemma \ref{lem:IntervaloAintervalo}, $[x_m,x_k]\subseteq f([x_j,x_i])$. We now distinguish cases. If $[x_m,x_k]\subseteq [x_j,x_i]$, then $f_{|[x_j,x_i]}$ has a fixed point by the Lefschetz fixed point theorem (\cite{baclawski1979fixed}) and consequently we get a contradiction. If  $[x_j,x_i]\subsetneq [x_m,x_k]$, then  $|[x_j,x_i]|\geq |[x_m,x_k]|$ in contradiction with Remark \ref{rem:cardinalBaja}.
\end{proof}

The results about the dynamics of $L$ give us some restrictions about simplicial approximations. Concretely, we may not expect to get some dynamical information of a continuous map $f:\mathbb{R}\rightarrow \mathbb{R}$ from a simplicial approximation.  Let us consider a continuous map $f:\mathbb{R}\rightarrow \mathbb{R}$ with a periodic point of period $3$, for example, $f(x)=\min (1,1-2|x-\frac{1}{2}|)$. Suppose that $g$ is a simplicial approximation of $f$. Then it is clear that $g$ cannot have periodic points of period $n\geq 3$, otherwise we would get that $\mathcal{X}(g)$ is a continuous map having periodic points of period $n$.

\begin{thm} Let $f:\mathbb{R}\rightarrow \mathbb{R}$ be a continuous map and $g$ a simplicial approximation of $f$. Then $g$ can only have periodic points of period $2$ or fixed points.
\end{thm}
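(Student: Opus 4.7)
The plan is to argue by contradiction, exploiting the functoriality of the face-poset construction. Suppose $g$ has a periodic point $x$ of period $n\geq 3$. Since $g$ is simplicial with respect to a triangulation of $\mathbb{R}$ whose face poset is $L$, it induces an order-preserving (hence continuous) self-map $\mathcal{X}(g)\colon L\to L$. The open-simplex carrier map $\pi\colon\mathbb{R}\to L$, sending a point $y$ to the unique open simplex containing it, satisfies the compatibility $\pi\circ g=\mathcal{X}(g)\circ\pi$, because a simplicial map sends the interior of a simplex onto the interior of its image simplex. Applying $\pi$ to the $g$-orbit of $x$, the simplex $\sigma:=\pi(x)$ becomes a periodic point of $\mathcal{X}(g)$ whose period $m$ divides $n$, and by Theorem~\ref{thm:dinamica_periodo_3_no} applied to $\mathcal{X}(g)$ one must have $m\in\{1,2\}$. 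The remaining task is to bootstrap this into the stronger conclusion $n\leq 2$.

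If $m=1$ then $g(\sigma)=\sigma$, so $g$ permutes the vertices of $\sigma$ and $g|_\sigma$ is an affine self-bijection of a simplex of dimension $\leq 1$, i.e.\ the identity or the swap of the two endpoints of an edge. In both cases $g^2(x)=x$, contradicting $n\geq 3$. If $m=2$ and $\sigma$ is a vertex then $x$ already has $g$-period $2$, again contradicting $n\geq 3$. The delicate sub-case is $m=2$ with $\sigma=[v_0,v_1]$ an edge swapped with a distinct edge $\sigma':=g(\sigma)=[u_0,u_1]$. Since neither $g|_\sigma$ nor $g|_{\sigma'}$ collapses (both images remain $1$-dimensional), $g$ restricts to bijections $\{v_0,v_1\}\leftrightarrow\{u_0,u_1\}$, and consequently $g^2|_\sigma$ is an affine self-bijection of the edge $\sigma$: either the identity (giving again $g^2(x)=x$) or the swap $v_0\leftrightarrow v_1$.

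The swap is the one scenario compatible with $x$ having $g$-period $4$, and is the genuine obstacle. To rule it out, I track the vertex orbit itself, which after relabeling reads $v_0\mapsto u_0\mapsto v_1\mapsto u_1\mapsto v_0$. A short check shows that every possible coincidence among $v_0,v_1,u_0,u_1$ collapses $\sigma$ or $\sigma'$ to a point, contradicting their being edges; hence the four vertices are distinct and $\{v_0\}\in L$ is a periodic point of $\mathcal{X}(g)$ of period $4$, contradicting Theorem~\ref{thm:dinamica_periodo_3_no} once more. Assembling the cases gives $n\leq 2$, and thus the theorem. The main difficulty is precisely that the $\mathcal{X}(g)$-period of $\pi(x)$ only divides $n$ and may be strictly smaller, which is why one cannot transfer the period of $x$ to $L$ directly and must examine both the restriction of $g$ to the carrier edge and the orbits of the vertices sitting inside $L$.
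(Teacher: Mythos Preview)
Your argument is correct and follows the paper's route: pass to the induced map $\mathcal{X}(g)\colon L\to L$ and invoke Theorem~\ref{thm:dinamica_periodo_3_no}. The paper's own justification is the one-sentence remark immediately preceding the theorem, which simply asserts that a period-$n$ point of $g$ yields a period-$n$ point of $\mathcal{X}(g)$; your case analysis handling the possibility that the carrier simplex has strictly smaller $\mathcal{X}(g)$-period (and then chasing the vertex orbit to reach a contradiction) makes the argument more complete than what the paper actually records.
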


\section{Multivalued dynamics}\label{sec:dinamicaMultievaluada} 

We now introduce a sort of maps and multivalued maps that are very useful to define dynamics in Alexandroff spaces (one may refer to \cite{chocano2020coincidence} for a complete introduction and discussion about them). We recall that a topological space $X$ is acyclic if $\tilde{H}_i(X)=0$ for every $i\geq 0$, where $\tilde{H}_i(X)$ denotes the reduced homology group of $X$.
\begin{df} Let $f:X\rightarrow Y$ be a continuous map between two posets. We say that $f$ is a Vietoris-like map if for every chain $y_1<...<y_n$ in $Y$, $\bigcup_{i=1}^n f^{-1}(y_i)$ is acyclic.
\end{df}
Consider a multivalued map $F:X\multimap X$, where $X$ is a poset. The graph of $F$ is $\Gamma(F)=\{(x,y)\in X\times Y|y\in F(x) \}$, which is a subposet of $X\times Y$ with the product order. There are two natural projections $p:\Gamma(F)\rightarrow X$ and $q:\Gamma(F)\rightarrow Y$, that are continuous. 
\begin{df} Let $F:X\multimap Y$ be a multivalued map between two posets. We say that $F$ is a Vietoris-like multivalued map if $p$ is a Vietoris-like map.
\end{df}
Given a Vietoris-like multivalued map $F:X\rightarrow X$, a solution or orbit for $F$ is $\{t_i\}_{i\in \mathbb{Z}}$ such that $t_{i+1}\in F(t_{i})$ and  if $t_i=t_{i+1}\in F(t_i)$ and $|F(t_i)|\geq 2$ then there exists $n>i+1$ satisfying $t_{n}\neq t_{i+1}$. We say that $x$ is a fixed point if $x\in F(x)$  and $x_1$ is a periodic point of period $n$ if there exists a finite sequence of pairwise distinct points $\{x_1,...,x_{n}\}$ such that $x_2\in F(x_1)$, $x_3\in F(x_2)$, $...$, $x_1\in F(x_n)$. 

Vietoris-like maps are important because they induce morphisms in homology groups, that is, given a Vietoris-like map $F:X\multimap Y$, there is a natural morphism $F_*=H_*(X)\rightarrow H_*(Y)$. This leads us to apply homological methods to obtain dynamical information. A Lefschetz number $\Lambda(F)$ can be defined for $F$ if $X=Y$ (see \cite{chocano2020coincidence} for a detailed description). There is also a Lefschetz fixed point theorem in this setting.
\begin{thm}\label{thm:lefschetzfixed}
Let $X$ be a finite poset and let $F:X\multimap X$ be a Vietoris-like multivalued map such that $\Lambda(F)\neq 0$. Then there exists $x\in X$ such that $x\in F(x)$.
\end{thm}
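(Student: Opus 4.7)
The plan is to adapt the classical Eilenberg--Montgomery coincidence approach to the poset category, using the two projections $p,q:\Gamma(F)\to X$ from the graph of $F$. Observe that a point $(x,y)\in\Gamma(F)$ with $p(x,y)=q(x,y)$ is precisely a fixed point of $F$, so the statement reduces to showing that if $p$ and $q$ have no coincidence point, then $\Lambda(F)=0$.

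First I would recall that the Vietoris-like hypothesis on $F$ is precisely that the preimages under $p$ of chains in $X$ are acyclic. Via a Vietoris--Begle type theorem for Alexandroff spaces (available in \cite{chocano2020coincidence}), this implies that $p_{*}:H_{*}(\Gamma(F))\to H_{*}(X)$ is an isomorphism, so that $F_{*}:=q_{*}\circ p_{*}^{-1}$ is a well-defined graded endomorphism of $H_{*}(X)$ and
\[
\Lambda(F)=\sum_{i}(-1)^{i}\,\mathrm{tr}\bigl((q_{*}\circ p_{*}^{-1})_{i}\bigr)
\]
is a well-defined integer by finiteness of $X$.

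I would then argue contrapositively: assume $F$ has no fixed point, equivalently $\Gamma(F)\cap\Delta_{X}=\emptyset$ inside $X\times X$, where $\Delta_{X}$ denotes the diagonal. Passing to the order complexes $\mathcal{K}(X)$ and $\mathcal{K}(\Gamma(F))$ and to a sufficiently fine barycentric subdivision of $\mathcal{K}(X)$, I would invoke simplicial approximation to produce a simplicial self-map $g$ of $\mathcal{K}(X)$ that realises the correspondence $q\circ p^{-1}$ up to homotopy and which, thanks to the disjointness of $\Gamma(F)$ from $\Delta_{X}$ in the finite product $X\times X$, is fixed-point-free on every simplex. The classical Hopf trace formula applied to $g$ then gives $\Lambda(g)=0$, and since $g_{*}=F_{*}$ by construction, this forces $\Lambda(F)=0$, contradicting the hypothesis.

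The main obstacle is the third step: constructing the simplicial approximation $g$ whose induced homology endomorphism agrees with $F_{*}$ and which genuinely inherits fixed-point-freeness from $F$. This requires a careful Eilenberg--Montgomery style selection argument carried out combinatorially on chains, using the acyclicity of the fibers of $p$ to select a single-valued continuous representative of $q\circ p^{-1}$ after sufficient subdivision, and exploiting the finiteness of $X\times X$ to ensure that the disjointness $\Gamma(F)\cap\Delta_{X}=\emptyset$ survives subdivision as genuine simplexwise fixed-point-freeness. Once this selection lemma is in hand (and its verification is essentially the technical core of the argument), the reduction to the classical Hopf--Lefschetz theorem is routine.
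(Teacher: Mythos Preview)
The paper does not supply a proof of this statement: Theorem~\ref{thm:lefschetzfixed} is quoted from \cite{chocano2020coincidence} as background, so there is no in-paper argument to compare your proposal against.

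On the proposal itself, the overall architecture (graph of $F$, Vietoris--Begle for $p_*$, then a coincidence/Lefschetz argument for the pair $(p,q)$) is sound and is the standard shape of such proofs. The gap is exactly where you place it, but the resolution you sketch is not the right one. Producing a single-valued simplicial selector $g$ of $q\circ p^{-1}$ that is \emph{fixed-point-free} is not a matter of subdividing finely enough: barycentric subdivision creates no metric separation between $\Gamma(F)$ and $\Delta_X$, and a continuous selector through an acyclic carrier can perfectly well acquire fixed points that the carrier does not have, so the assertion that ``disjointness \ldots\ survives subdivision as genuine simplexwise fixed-point-freeness'' is unjustified as stated. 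The clean route bypasses the selector entirely and shows that the \emph{simplicial} maps $\mathcal{K}(p),\mathcal{K}(q)$ already have no coincidence on $|\mathcal{K}(\Gamma(F))|$ whenever $\Gamma(F)\cap\Delta_X=\emptyset$: if $\sum_i t_i(x_i,y_i)$ lies in an open simplex (so $x_1\le\cdots\le x_n$, $y_1\le\cdots\le y_n$, all $t_i>0$) and $\sum_i t_i x_i=\sum_i t_i y_i$ in $|\mathcal{K}(X)|$, then the two supports coincide as chains, and comparing the strictly increasing partial sums $T_k=\sum_{i\le k}t_i$ against the cumulative barycentric masses forces $x_i=y_i$ for every $i$, contradicting $\Gamma(F)\cap\Delta_X=\emptyset$. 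With this lemma in hand one invokes the classical Eilenberg--Montgomery coincidence theorem on the compact polyhedron $|\mathcal{K}(X)|$ directly for the pair $(|\mathcal{K}(p)|,|\mathcal{K}(q)|)$---the Vietoris-like hypothesis on $p$ being precisely what makes $|\mathcal{K}(p)|_*$ an isomorphism---and no fixed-point-free simplicial approximation is required.
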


Moreover, Vietoris-like maps play a fundamental role in methods of approximation to discrete dynamical systems. We explain it briefly: suppose that $X$ is a compact polyhedron. Then there exists an inverse sequence of finite spaces $(X_n,p_{n,n+1})$ satisfying that its inverse limit contains a homeomorphic of $X$ which is a strong deformation retract (\cite{clader2009inverse}). Let us consider now a continuous map $f:X\rightarrow X$, $f$ induces naturally a sequence of Vietoris-like multivalued maps $F_i:X_i\multimap X_i$. Studying the fixed points of $F_i$ we may locate the fixed points of $f$.

Regarding to our context, we give some examples to show the flexibility of the Vietoris-like multivalued maps to define dynamical systems for non-finite Alexandroff spaces. Firstly, we show that we can get a multivalued map with periodic points of every period.
\begin{ex} Consider $F:L\multimap L$ defined by 
\[F(x_i)=\begin{cases}
[x_0,x_2] & \text{if} \ i\leq 0 \\
[x_0,x_{i+1}] & \text{if} \ i> 0 \ \text{and} \ i \ \text{is odd} \\
[x_0,x_{i+2}] & \text{if} \ i> 0 \ \text{and} \ i \ \text{is even} .
\end{cases}
\]
$F$ is trivially a Vietoris-like multivalued map because $F$ is strongly upper semicontinuous with acyclic values (see \cite{barmak2020Lefschetz} and \cite{chocano2020coincidence}). For every $n\in \mathbb{N}$ there exists a periodic point of period $n$. Consider the sequence $\{ x_{i}\}_{i=1,...,n}$. By construction it is clear that $x_{i+1}\in F(x_i)$ where $1\leq i\leq n-1$ and $x_{1}\in F(x_n)$, which means that we have a periodic point of period $n$.
\end{ex} 
However, the following example shows that the Sharkovski theorem does not hold in this combinatorial framework. 
\begin{ex} Let $F:L\multimap L$ be defined by $F(x)=[x_1,x_2,x_3]$. It is easily seen that $F$ is a Vietoris-like multivalued map. We have $x_2\in F(x_1)$, $x_3\in F(x_2)$ and $x_1\in F(x_3)$, that is, a periodic point of period $3$. But there are no periodic points of period greater than $3$.
\end{ex} 
Note that this example is also telling us that there is more richness, at least in terms of periodic points, in the combinatorial setting than in the classical. Moreover, we cannot expect to get a result like the one obtained in \cite{mrozek2021semiflows}, where some combinatorial dynamics induce semiflows in the usual sense. To conclude, we provide an example of a multivalued map having different types of invariant sets at the same time (this only happens for the multivalued case).

\begin{ex}\label{ex:DinamicaMultievaluada} Consider $F:L\rightarrow L$ defined by 
\[ F(x_i)=\begin{cases}
x_{i+2} \ & \text{if} \ i<-7 \\
x_i \ & \text{if} \  -7\leq i\leq -5 \\
[x_{i},x_{i+1}] \ & \text{if} \ -5<i<-1\\
x_i  \ & \text{if} \ -1\leq i\leq 1 \\
[x_{i-1},x_{i}] \ & \ \text{if} \ 1< i<  5\\
x_i \ & \text{if} \ 5\leq i\leq 7 \\
[x_{i},x_{i+1}]  \ & \ \text{if} \ 7<i.
\end{cases}
\]
It is an easy exercise to prove that $F$ is a Vietoris-like multivalued map (because it is similar to a multiflow \cite{chocano2020coincidence}). There are three invariant sets: $[x_{-7},x_{-5}]$ (a saddle), $[x_{-1},x_{1}]$ (an attractor) and $[x_{5},x_{7}]$ (a repeller). For a schematic draw using the Hasse diagram of $L$ see Figure \ref{fig:DinamicaMultievaluada}.
\begin{figure}[h]
\centering
\includegraphics[scale=1.2]{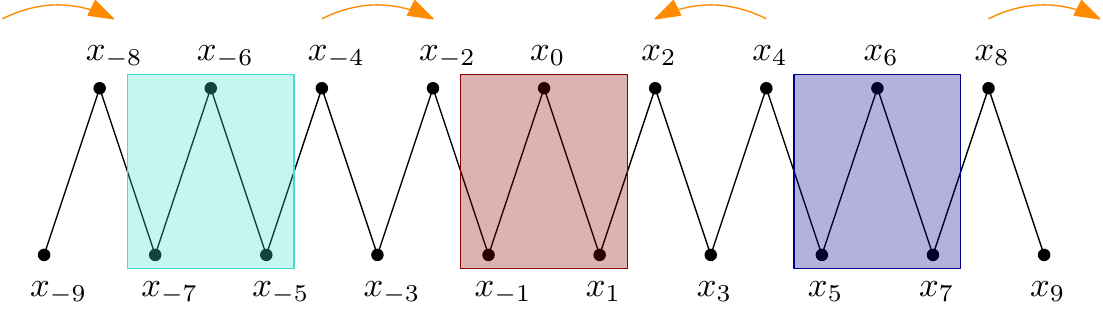}\caption{Schematic drawing of the dynamics of $F$.}\label{fig:DinamicaMultievaluada}
\end{figure}
\end{ex}
Notice that modifying the multivalued map $F$ of Example \ref{ex:DinamicaMultievaluada} it is very easy to get different situations. This example also points out that considering multivalued maps seems more suitable to speak about dynamics in this framework.

\textbf{Acknowledgement.} The author thanks Hector Barge for posing him the following question during DyToComp 2022: ``Is Sharkovski theorem true for Alexandroff spaces?''. This question has motivated the current paper. The author also thanks Jonathan Barmak for pointing out a mistake in Lemma 3.1 in a previous version of the paper.

\bibliography{bibliografia}
\bibliographystyle{plain}

\newcommand{\Addresses}{{
  \bigskip
  \footnotesize

  \textsc{ P.J. Chocano, Departamento de Matemática Aplicada,
Ciencia e Ingeniería de los Materiales y
Tecnología Electrónica, ESCET
Universidad Rey Juan Carlos, 28933
Móstoles (Madrid), Spain}\par\nopagebreak
  \textit{E-mail address}:\texttt{pedro.chocano@urjc.es}

}}

\Addresses

\end{document}